\newtheorem{theorem}{Theorem}[section]
\newtheorem{lemma}[theorem]{Lemma}
\theoremstyle{definition}
\newtheorem{definition}[theorem]{Definition}
\newtheorem{example}[theorem]{Example}
\newtheorem{corollary}[theorem]{Corollary}
\theoremstyle{remark}
\newtheorem{remark}[theorem]{Remark}
\numberwithin{equation}{section}
\begin{document}

\title{Sign of the solutions of linear fractional differential equations and some applications}

\author{Rui A. C. Ferreira}
\address{Grupo F\'isica-Matem\'atica, Faculdade de Ci\^encias, Universidade de Lisboa,
Av. Prof. Gama Pinto, 2, 1649-003, Lisboa, Portugal.}
\email{raferreira@fc.ul.pt}
\thanks{The author was supported by the ``Funda\c{c}\~{a}o para a Ci\^encia e a Tecnologia (FCT)" through the program ``Stimulus of Scientific Employment, Individual Support-2017 Call" with reference CEECIND/00640/2017.}


\subjclass[2000]{Primary 26A33, 34A30; Secondary 49K30}



\keywords{Linear equation, fractional derivative, Herglotz variational problem}

\begin{abstract}
In this work we wish to highlight some consequences of a recent result proved in [N. D. Cong\ and\ H. T. Tuan, Generation of nonlocal fractional dynamical systems by fractional differential equations, J. Integral Equations Appl. {\bf 29} (2017), no.~4, 585--608]. Particular emphasis will be given to its application on fractional variational problems of Herglotz type.
\end{abstract}

\maketitle

\section{Preamble}

Very recently, in 2017, Cong \emph{et. al.} \cite{Cong} proved the following result (we refer the reader to Section \ref{sect2} for the definitions appearing below):

\begin{theorem}\label{CongThm}
Consider the following fractional differential equation
\begin{equation}\label{CapDE}
    ^CD_{a^+}^{\alpha} [x](t)=f(t,x(t)),\quad 0<\alpha\leq 1,
\end{equation}
where $f:[a,\infty)\times\mathbb{R}\to\mathbb{R}$ is a continuous function satisfying the Lipschitz condition
\begin{equation}\label{Lip}
    |f(t,x)-f(t,y)|\leq L(t)|x-y|,\quad t\in [a,\infty),\ x,y\in\mathbb{R},\ L\in C([a,\infty),\mathbb{R}).
\end{equation}
Then, for any two different initial values $x_{1a}\neq x_{2a}$ in $\mathbb{R}$, the solutions $x_1$ and $x_2$ of \eqref{CapDE} starting from $x_{1a} = x_1(a)$ and $x_{2a} = x_2(a)$ verify $x_1(t)\neq x_2(t)$ for all $t\in[a,\infty)$.
\end{theorem}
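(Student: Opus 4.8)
The plan is to reduce the statement to a question about the sign of the solution of a \emph{scalar linear} Caputo equation and then to exploit the positivity of the Mittag--Leffler functions. Set $z=x_1-x_2$. By the symmetry of the hypotheses in $x_1,x_2$ we may assume $z(a)=x_{1a}-x_{2a}=:\delta>0$. Subtracting the two copies of \eqref{CapDE} shows that $z$ is continuous and satisfies ${}^CD_{a^+}^{\alpha}[z](t)=f(t,x_1(t))-f(t,x_2(t))$ on $[a,\infty)$. Arguing by contradiction, suppose that $z$ vanishes somewhere and let $t^*>a$ be its first zero; then $z>0$ on $[a,t^*)$ and $z(t^*)=0$. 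The goal is to show that this last equality is impossible.

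The key manipulation is to turn the (merely Lipschitz) right-hand side into a genuine linear term plus a nonnegative forcing. Put $\bar L=\max_{t\in[a,t^*]}L(t)<\infty$ (finite since $L$ is continuous and $[a,t^*]$ is compact) and define $h(t)=f(t,x_1(t))-f(t,x_2(t))+\bar L\,z(t)$. On $[a,t^*]$ we have $z\ge0$, so the Lipschitz bound \eqref{Lip} gives $f(t,x_1(t))-f(t,x_2(t))\ge -L(t)\,z(t)\ge -\bar L\,z(t)$, whence $h\ge0$ there. By construction $z$ solves the linear initial value problem ${}^CD_{a^+}^{\alpha}[z](t)+\bar L\,z(t)=h(t)$, $z(a)=\delta$, with continuous nonnegative forcing $h$.

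Now I would invoke the variation-of-constants representation for linear Caputo equations, which yields, for $t\in[a,t^*]$,
\[ z(t)=\delta\,E_{\alpha}\big(-\bar L (t-a)^{\alpha}\big)+\int_a^t (t-s)^{\alpha-1}E_{\alpha,\alpha}\big(-\bar L (t-s)^{\alpha}\big)\,h(s)\,ds. \]
Both terms are nonnegative: for $0<\alpha\le1$ the Mittag--Leffler functions $E_\alpha(-x)$ and $E_{\alpha,\alpha}(-x)$ are completely monotone and nonnegative for $x\ge0$, with $E_\alpha(-x)>0$, and $h\ge0$. Consequently $z(t)\ge \delta\,E_\alpha(-\bar L (t-a)^{\alpha})>0$ for every $t\in[a,t^*]$; in particular $z(t^*)>0$, contradicting $z(t^*)=0$. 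Hence $z$ never vanishes, that is, $x_1(t)\ne x_2(t)$ for all $t\ge a$.

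The main obstacle is precisely the step that replaces the naive ODE uniqueness argument, which fails here because the Caputo derivative is nonlocal: one cannot \emph{restart} the equation at $t^*$ using $x_1(t^*)=x_2(t^*)$, since the derivative at later times still remembers the whole history on $[a,t^*]$. The device that overcomes this is the positivity of the fractional resolvent, encoded in the complete monotonicity of $E_\alpha(-x)$ and $E_{\alpha,\alpha}(-x)$ for $0<\alpha\le1$; this is exactly where the restriction $0<\alpha\le1$ is used, and establishing (or citing) these positivity properties is the technical heart of the argument.
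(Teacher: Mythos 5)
The paper does not prove Theorem \ref{CongThm} itself---it is imported from \cite{Cong}---and your argument is, in substance, the one given there: pass to the first meeting time $t^*$, absorb the Lipschitz right-hand side into a linear equation with constant coefficient $-\bar L$ plus a nonnegative forcing, and conclude from the variation-of-constants formula (Theorem \ref{VCF}) together with the nonnegativity of $E_{\alpha,\alpha}$ and the strict positivity of $E_\alpha$ on the negative half-line. The proof is correct; the only ingredient going slightly beyond the paper's stated inequality \eqref{ineqM-L} is the strict bound $E_\alpha(-x)>0$, which, as you indicate, follows from complete monotonicity (a nonincreasing entire function equal to $1$ at $0$ cannot vanish at a point without vanishing identically thereafter).
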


\begin{remark}
The existence and uniqueness of (continuous) solutions for \eqref{CapDE} with the given initial conditions is guaranteed by, e.g., \cite[Theorem 2]{Baleanu}.
\end{remark}

\begin{remark}
Theorem \ref{CongThm} was conjectured in 2008 by Diethelm \cite{Diethelm1} and solved partially therein. However, it was only in 2017 that a complete and correct proof of it was given (see \cite{Cong} for the historical developments regarding this result).
\end{remark}
In particular, for $0<\alpha<1$ consider the linear initial value problem (IVP)
\begin{equation}\label{IVPCap}
^CD_{a^+}^{\alpha} [x](t)=g(t)x(t),\quad x(a)=x_a>0,
\end{equation}
for which $f(t,x)=g(t)x$ (with $g\in C([a,\infty),\mathbb{R})$) obviosuly satisfies \eqref{Lip}. Then we may conclude from Theorem \ref{CongThm} that the solution of \eqref{IVPCap} is positive on $[a,\infty)$. We find this result of much interest and, to the best of our knowledge, it was not sufficiently highlighted in the literature so far; therefore, we shall write it in the following:

\begin{theorem}\label{thm1.3}
Let $0<\alpha< 1$ and $g\in C([a,\infty),\mathbb{R})$. Then the solution of the IVP \eqref{IVPCap} is positive on $[a,\infty)$.
\end{theorem}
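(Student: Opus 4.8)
The plan is to apply Theorem \ref{CongThm} using the zero function as a comparison solution. First I would check that the right-hand side $f(t,x)=g(t)x$ of \eqref{IVPCap} satisfies the hypotheses of Theorem \ref{CongThm}: it is continuous on $[a,\infty)\times\mathbb{R}$, and since $|g(t)x-g(t)y|=|g(t)|\,|x-y|$ it obeys \eqref{Lip} with $L(t)=|g(t)|\in C([a,\infty),\mathbb{R})$. Hence the non-crossing conclusion of Theorem \ref{CongThm} is available for equation \eqref{IVPCap}.

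The key observation is that the constant function $x_2\equiv 0$ is a solution of the differential equation in \eqref{IVPCap} with initial value $x_2(a)=0$. Indeed, the Caputo derivative of the zero function vanishes identically, so $^CD_{a^+}^{\alpha}[x_2](t)=0=g(t)\cdot 0=g(t)x_2(t)$. Let $x_1=x$ denote the solution corresponding to the initial value $x_1(a)=x_a>0$. Because $x_1(a)=x_a\neq 0=x_2(a)$, Theorem \ref{CongThm} guarantees that $x(t)=x_1(t)\neq x_2(t)=0$ for every $t\in[a,\infty)$; in other words, the solution never vanishes on $[a,\infty)$.

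It then remains to upgrade the statement ``$x(t)\neq 0$'' to ``$x(t)>0$''. For this I would use the continuity of $x$ (ensured by the existence remark following Theorem \ref{CongThm}) together with the intermediate value theorem: a continuous function that is strictly positive at $t=a$ and is nowhere zero cannot assume a negative value, since otherwise it would be forced to cross zero at some intermediate point, contradicting the conclusion of the previous step. Therefore $x(t)>0$ throughout $[a,\infty)$, which is the desired conclusion.

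I do not anticipate a substantial obstacle, as Theorem \ref{thm1.3} is essentially a corollary of Theorem \ref{CongThm}; the only points requiring care are confirming that the zero function is a bona fide (continuous) solution, so that the non-crossing property legitimately applies, and then combining continuity with the intermediate value theorem to pass from non-vanishing to strict positivity.
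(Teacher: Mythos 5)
Your proposal is correct and is essentially the paper's argument: the paper derives Theorem \ref{thm1.3} directly from Theorem \ref{CongThm} by noting that $f(t,x)=g(t)x$ satisfies \eqref{Lip}, and the comparison with the trivial solution $x\equiv 0$ that you spell out is exactly the mechanism the paper uses (and makes explicit later in the proof of Theorem \ref{thm3.77}). Your added detail on passing from non-vanishing to strict positivity via continuity and the intermediate value theorem is a correct filling-in of what the paper leaves implicit.
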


We may extract several interesting consequences of Theorem \ref{thm1.3} and we will refer some in Section \ref{ApplicSect} of this work. But before we go into it we want to observe that, following the same steps as those of the proof of Theorem \ref{CongThm}, we may prove an analogous result where in  the differential equation \eqref{CapDE} we use the left Riemann--Liouville fractional derivative and the initial conditions are given by $I_{a^+}^{1-\alpha}[x](a)=x_a$ (again we refer the reader to Section \ref{sect2} to understand the meaning of the symbols used in this section). Then, we may prove the following result:

\begin{theorem}\label{thm1.4}
Let $0<\alpha< 1$ and $g\in C((a,\infty),\mathbb{R})$. Then the solution of the IVP 
\begin{equation}\label{IVPR-L}
D_{a^+}^{\alpha} [x](t)=g(t)x(t),\quad I_{a^+}^{1-\alpha}[x](a)=x_a>0,
\end{equation}
is positive on $(a,\infty)$.
\end{theorem}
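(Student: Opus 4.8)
The plan is to mirror the proof of Theorem \ref{CongThm} in the Riemann--Liouville setting and then read off the sign from the behaviour of the solution near $t=a$. First I would pass to the equivalent Volterra integral equation: a function $x$, continuous on $(a,\infty)$, solves \eqref{IVPR-L} precisely when
\[
x(t)=\frac{x_a}{\Gamma(\alpha)}(t-a)^{\alpha-1}+\frac{1}{\Gamma(\alpha)}\int_a^t (t-s)^{\alpha-1}g(s)x(s)\,ds,\qquad t>a.
\]
The leading singular term $\frac{x_a}{\Gamma(\alpha)}(t-a)^{\alpha-1}$ encodes the initial datum $I_{a^+}^{1-\alpha}[x](a)=x_a$, in contrast to the Caputo case where the datum appears as the finite value $x(a)$; this is the feature that must be tracked throughout.

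The core step is to establish the Riemann--Liouville analogue of Theorem \ref{CongThm}: if $x_1,x_2$ solve the equation $D_{a^+}^{\alpha}[x]=g\,x$ with $I_{a^+}^{1-\alpha}[x_1](a)=x_{1a}\neq x_{2a}=I_{a^+}^{1-\alpha}[x_2](a)$, then $x_1(t)\neq x_2(t)$ for every $t\in(a,\infty)$. Following Cong and Tuan, I would argue by contradiction. The difference $w=x_1-x_2$ solves the homogeneous problem $D_{a^+}^{\alpha}[w]=g\,w$ with $I_{a^+}^{1-\alpha}[w](a)=x_{1a}-x_{2a}\neq 0$, so from the integral representation $w$ blows up like $(x_{1a}-x_{2a})\Gamma(\alpha)^{-1}(t-a)^{\alpha-1}$ near $a$; in particular $w$ is nonzero on a right-neighbourhood of $a$, and any first coincidence occurs at some $t^\ast>a$. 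On $(a,t^\ast)$ the continuous function $w$ keeps a constant sign while $w(t^\ast)=0$, and the contradiction is then extracted from the integral representation together with the sign information at $t^\ast$, exactly as in \cite{Cong}.

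With this separation property in hand, the theorem follows quickly. Choosing $x_{2a}=0$ forces, by uniqueness, $x_2\equiv 0$ (the zero function solves the integral equation with vanishing initial term), so the solution $x$ of \eqref{IVPR-L} with $x_a>0$ satisfies $x(t)\neq 0$ for all $t\in(a,\infty)$. Being continuous and nowhere zero on the connected set $(a,\infty)$, $x$ has a constant sign there. Finally, since the integral term is of lower order than the leading term as $t\to a^+$ (heuristically, $(t-a)^{2\alpha-1}=o\big((t-a)^{\alpha-1}\big)$ when $x(s)\sim C(s-a)^{\alpha-1}$), the representation yields $\lim_{t\to a^+}(t-a)^{1-\alpha}x(t)=x_a/\Gamma(\alpha)>0$, so $x$ is positive near $a$. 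Hence $x>0$ on all of $(a,\infty)$.

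The main obstacle is the first step, the non-crossing estimate, because the Cong--Tuan machinery is tailored to solutions that are continuous up to and including $t=a$, whereas here $w$ has no finite initial value and blows up like $(t-a)^{\alpha-1}$, and $g$ is only assumed continuous on the open interval $(a,\infty)$. I therefore expect the delicate point to be showing that the singular kernel and the unbounded initial behaviour do not spoil the first-coincidence-point argument: one must justify the relevant fractional sign estimates at $t^\ast$ on compact subintervals of $(a,\infty)$, bounded away from the singularity at $a$, and control the contribution of the integral over a neighbourhood of $a$, where $g$ need not be bounded. Working in the weighted space in which $(t-a)^{1-\alpha}x(t)$ is continuous should make both the integral representation and the limit $\lim_{t\to a^+}(t-a)^{1-\alpha}x(t)$ rigorous; once these estimates are secured, the remainder of the argument is routine.
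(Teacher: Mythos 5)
Your proposal is correct and takes essentially the same route as the paper, which itself only asserts that the result follows ``by the same steps as the proof of Theorem \ref{CongThm}'' (i.e.\ by adapting the separation-of-solutions argument of \cite{Cong} to the Riemann--Liouville setting). In fact you supply more detail than the paper does: the equivalent weakly singular Volterra equation, the reduction to non-crossing with the trivial solution, and the sign determination near $t=a$ via $\lim_{t\to a^+}(t-a)^{1-\alpha}x(t)=x_a/\Gamma(\alpha)>0$, which is genuinely needed here since, unlike the Caputo case, $x$ has no finite value at $a$ from which to read off the sign.
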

However, in this work, we will be especially interested in the analogous theorem to Theorem \ref{thm1.4} but using the right fractional derivative (cf. the proof of Theorem \ref{mainHerglFract}). In order to accomplish it we may appeal to the duality results (for left and right fractional operators) presented and proved in \cite{Caputo}. We, therefore, have:

\begin{theorem}\label{thm1.5}
Let $0<\alpha< 1$ and $g\in C((-\infty,b),\mathbb{R})$. Then the solution of the IVP 
\begin{equation}\label{IVPR-L}
D_{b^-}^{\alpha} [x](t)=g(t)x(t),\quad I_{b^-}^{1-\alpha}[x](b)=x_b>0,
\end{equation}
is positive on $(-\infty,b)$.
\end{theorem}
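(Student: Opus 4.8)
The plan is to reduce Theorem \ref{thm1.5} to Theorem \ref{thm1.4} by a reflection of the time variable, exploiting precisely the duality between left and right fractional operators established in \cite{Caputo}. Concretely, I would introduce the reflection $\sigma(t)=2b-t$, which is an involution fixing $b$ and carrying the half-line $(-\infty,b)$ bijectively onto $(b,\infty)$, and set $y(u):=x(2b-u)=x(\sigma(u))$ for $u\in(b,\infty)$. The guiding idea is that the right Riemann--Liouville operators based at $b$ acting on $x$ should correspond, under $\sigma$, to the left Riemann--Liouville operators based at $b$ acting on $y$, so that $y$ ends up solving an IVP of exactly the type covered by Theorem \ref{thm1.4}.

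The first step is to record (or re-derive directly from the definitions in Section \ref{sect2}) the two duality identities
\begin{equation*}
I_{b^-}^{1-\alpha}[x](t)=I_{b^+}^{1-\alpha}[y](2b-t),\qquad D_{b^-}^{\alpha}[x](t)=D_{b^+}^{\alpha}[y](2b-t),\qquad t\in(-\infty,b).
\end{equation*}
The integral identity follows from the substitution $s\mapsto 2b-s$ in the defining convolution integral. The derivative identity then follows by differentiating the integral identity and observing that the factor $-1$ produced by the chain rule cancels exactly against the sign difference built into the definitions of the two Riemann--Liouville derivatives, since $D_{b^-}^{\alpha}=-\frac{d}{dt}\,I_{b^-}^{1-\alpha}$ whereas $D_{b^+}^{\alpha}=\frac{d}{du}\,I_{b^+}^{1-\alpha}$. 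This sign bookkeeping is the one delicate point of the argument, and it is exactly what the duality results of \cite{Caputo} guarantee; a secondary technical matter is that those results should be read (or re-verified) on the half-line rather than on a compact interval.

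With these identities in hand, the second step transplants the problem. Writing $u=2b-t$ and using $x(t)=y(u)$ together with the duality for the derivative, the equation $D_{b^-}^{\alpha}[x](t)=g(t)x(t)$ becomes
\begin{equation*}
D_{b^+}^{\alpha}[y](u)=\tilde g(u)\,y(u),\qquad \tilde g(u):=g(2b-u),\quad u\in(b,\infty),
\end{equation*}
while evaluating the integral identity at $t=b$ turns the initial condition into $I_{b^+}^{1-\alpha}[y](b)=x_b>0$. Since $\tilde g$ is continuous on $(b,\infty)$ whenever $g$ is continuous on $(-\infty,b)$, the reflected function $y$ solves a left Riemann--Liouville IVP of precisely the form treated in Theorem \ref{thm1.4}, with the base point $a$ there replaced by $b$. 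Theorem \ref{thm1.4} then yields $y(u)>0$ for all $u\in(b,\infty)$, and since $t\in(-\infty,b)$ corresponds bijectively to $u=2b-t\in(b,\infty)$, we conclude $x(t)=y(2b-t)>0$ on $(-\infty,b)$. The only real obstacle is thus not conceptual but the correct matching of sign conventions in the duality identities; once those are pinned down, the statement is a clean transplantation of Theorem \ref{thm1.4}.
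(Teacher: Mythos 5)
Your proposal is correct and follows essentially the same route as the paper: the paper's entire justification for Theorem \ref{thm1.5} is to ``appeal to the duality results (for left and right fractional operators) presented and proved in \cite{Caputo}'' so as to reduce to Theorem \ref{thm1.4}, and your reflection $t\mapsto 2b-t$ with the two identities $I_{b^-}^{1-\alpha}[x](t)=I_{b^+}^{1-\alpha}[y](2b-t)$ and $D_{b^-}^{\alpha}[x](t)=D_{b^+}^{\alpha}[y](2b-t)$ is exactly that duality, carried out explicitly (including the correct cancellation of signs and the observation that the compact-interval statements of \cite{Caputo} must be read on the half-line).
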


\begin{remark}\label{rem1.6}
It is clear that, if $x_b<0$, then the solution in Theorem \ref{thm1.5} is negative on $(-\infty,b)$.
\end{remark}

The remaining of this manuscript is organized as follows: In Section \ref{sect2} we provide the reader with the definitions and results of fractional calculus needed in this work. In Section \ref{ApplicSect} we present interesting applications of Theorems \ref{thm1.3} and \ref{thm1.5} enunciated above.

\section{Preliminaries on fractional calculus}\label{sect2}

Let $I$ be an interval of $\mathbb{R}$ and $n\in\mathbb{N}$. Suppose that $E(I, \mathbb{R}^n)$ is a space of functions. We denote by $E_{loc}(I, \mathbb{R}^n)$ the space of functions $x : I\to \mathbb{R}^n$ such that $x \in E(J, \mathbb{R}^n)$ for every compact subinterval $J\subset I$. 

We now introduce the left and right fractional integrals and derivatives used in this work.

\begin{definition}Let $a<b$ be two real numbers and $[a,b]\subset I$. The left and right Riemann--Liouville fractional integrals of order $\alpha>0$ of a function $f\in L_{loc}^1(I,\mathbb{R}^n)$ are defined, respectively by
$$I_{a^+}^\alpha [f](t)=\frac{1}{\Gamma(\alpha)}\int_a^t(t-s)^{\alpha-1}f(s)ds,\ t\geq a,$$
and
$$I_{b^-}^\alpha [f](t)=\frac{1}{\Gamma(\alpha)}\int_t^b(s-t)^{\alpha-1}f(s)ds,\ t\leq b,$$
provided that the right-hand side exists.
For $\alpha=0$ we set $I_{a^+}^0[f](t)=I_{b^-}^0 [f](t)=f(t)$.
\end{definition}

\begin{definition}
The left and right Riemann--Liouville (RL) fractional derivatives of order $0<\alpha\leq 1$ of a function $f\in L_{loc}^1(I,\mathbb{R}^n)$ such that $I_{a^+}^{1-\alpha}[f]\in AC_{loc}(I,\mathbb{R}^n)$, respectively $I_{b^-}^{1-\alpha}[f]\in AC_{loc}(I,\mathbb{R}^n)$, are defined by
$$D_{a^+}^\alpha [f](t)=\frac{d}{dt}\left[I_{a^+}^{1-\alpha}[f]\right](t),$$
respectively,
$$D_{b^-}^\alpha [f](t)=-\frac{d}{dt}\left[I_{b^-}^{1-\alpha}[f]\right](t).$$
We denote by $AC_{a^+}^{\alpha}(I,\mathbb{R}^n)$, respectively $AC_{b^-}^{\alpha}(I,\mathbb{R}^n)$, the set of all functions $f\in L_{loc}^1(I,\mathbb{R}^n)$ possessing a left, respectively right, RL fractional derivative of order $0<\alpha\leq 1$.
\end{definition}

\begin{definition}
The left and right Caputo fractional derivatives of order $0<\alpha\leq 1$ of a function $f\in C(I,\mathbb{R}^n)$ such that $f-f(a)\in AC_{a^+}^{\alpha}(I,\mathbb{R}^n)$, respectively $f-f(b)\in AC_{b^-}^{\alpha}(I,\mathbb{R}^n)$, are defined by
$${^CD}_{a^+}^\alpha [f](t)={D}_{a^+}^\alpha [f-f(a)](t),$$
respectively,
$${^CD}_{b^-}^\alpha [f](t)={D}_{b^-}^\alpha [f-f(b)](t).$$
We denote by ${^CAC}_{a^+}^{\alpha}(I,\mathbb{R}^n)$, respectively ${^CAC}_{b^-}^{\alpha}(I,\mathbb{R}^n)$, the set of all functions $f\in C(I,\mathbb{R}^n)$ possessing a left, respectively right, Caputo fractional derivative of order $0<\alpha\leq 1$.
\end{definition}

We introduce the two-parametric Mittag--Leffler function
$$E_{\alpha,\beta}(z)=\sum_{k=0}^\infty\frac{z^k}{\Gamma(\alpha k+\beta)},\ \alpha>0,\ \beta\in\mathbb{C},$$
and we put $E_\alpha(z)=E_{\alpha,1}(z)$. It satisfies (cf. \cite{MS}),
\begin{equation}\label{ineqM-L}
    E_{\alpha,\beta}(t)\geq 0\mbox{ for } 0<\alpha\leq 1,\ \beta\geq\alpha.
\end{equation}

The following formulas may be found in \cite{Gorenflo}. 
\begin{lemma}
    Suppose that $\alpha,\beta,\gamma$ are positive real numbers and $\lambda\in\mathbb{R}$. Then,
    \begin{equation}\label{form2.2}
        I_{0^+}^\alpha[s^{\gamma-1}E_{\beta,\gamma}(\lambda s^\beta)](t)=t^{\alpha+\gamma-1}E_{\beta,\alpha+\gamma}(\lambda t^\beta),\quad t\geq 0,
    \end{equation}
    and 
    \begin{equation}\label{form2.3}
        E_{\alpha,\beta}(t)=\frac{1}{\Gamma(\beta)}+tE_{\alpha,\alpha+\beta}(t),\quad t\in\mathbb{R}.
    \end{equation}
\end{lemma}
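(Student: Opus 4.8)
The plan is to establish both identities directly from the defining power series of the Mittag--Leffler function, so that no machinery beyond the elementary fractional integral of a power and a termwise-integration argument is needed.

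I would begin with \eqref{form2.2}. First I would record the basic formula
$$I_{0^+}^\alpha[s^{\mu-1}](t)=\frac{\Gamma(\mu)}{\Gamma(\mu+\alpha)}\,t^{\mu+\alpha-1},\quad \mu>0,$$
which follows immediately from the definition of $I_{0^+}^\alpha$ together with the Beta integral: substituting $s=tu$ turns the integral into $t^{\alpha+\mu-1}B(\alpha,\mu)$, and $B(\alpha,\mu)=\Gamma(\alpha)\Gamma(\mu)/\Gamma(\alpha+\mu)$. Next I would expand
$$s^{\gamma-1}E_{\beta,\gamma}(\lambda s^\beta)=\sum_{k=0}^\infty\frac{\lambda^k}{\Gamma(\beta k+\gamma)}\,s^{\beta k+\gamma-1},$$
apply $I_{0^+}^\alpha$ to each term with $\mu=\beta k+\gamma>0$, and observe that the factor $\Gamma(\beta k+\gamma)$ cancels against the one produced by the power formula. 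This leaves
$$\sum_{k=0}^\infty\frac{\lambda^k}{\Gamma(\beta k+\alpha+\gamma)}\,t^{\beta k+\alpha+\gamma-1}=t^{\alpha+\gamma-1}E_{\beta,\alpha+\gamma}(\lambda t^\beta),$$
which is precisely the right-hand side of \eqref{form2.2}.

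The one point that requires care — and which I regard as the main, though mild, obstacle — is the interchange of the infinite sum with the integral defining $I_{0^+}^\alpha$. I would settle this by Tonelli's theorem applied to the series of absolute values: the integrand $(t-s)^{\alpha-1}|\lambda|^k s^{\beta k+\gamma-1}/\Gamma(\beta k+\gamma)$ is nonnegative, and summing over $k$ before integrating reproduces the above computation with $|\lambda|$ in place of $\lambda$, giving the finite value $t^{\alpha+\gamma-1}E_{\beta,\alpha+\gamma}(|\lambda|t^\beta)$ since the Mittag--Leffler function is entire. Having thus verified absolute integrability, Fubini's theorem licenses the termwise computation for the signed series. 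The singularity $s^{\gamma-1}$ at the origin is harmless because $\gamma>0$ keeps it integrable against $(t-s)^{\alpha-1}$.

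Finally, \eqref{form2.3} is a purely formal rearrangement of the series, valid throughout its region of convergence. Isolating the $k=0$ term yields $1/\Gamma(\beta)$, and reindexing the remaining sum via $k\mapsto k+1$ — using $\Gamma(\alpha(k+1)+\beta)=\Gamma(\alpha k+\alpha+\beta)$ — factors out a single power of $t$, producing $tE_{\alpha,\alpha+\beta}(t)$. No convergence issue arises here beyond the entireness of the Mittag--Leffler function already invoked above.
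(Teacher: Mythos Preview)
Your argument is correct: the termwise fractional integration via the Beta integral, justified by Tonelli/Fubini using the entireness of the Mittag--Leffler function, is exactly the standard route to \eqref{form2.2}, and \eqref{form2.3} is indeed just separating the $k=0$ term and reindexing.

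There is nothing to compare against, however, because the paper does not prove this lemma; it simply cites \cite{Gorenflo} with the sentence ``The following formulas may be found in \cite{Gorenflo}.'' Your proof supplies what the paper omits, and is in fact the computation one finds in that reference.
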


The following result may be consulted in, e.g., \cite[Theorem 5.15]{Kilbas}.

\begin{theorem}[Variation of constants formula]\label{VCF}
The solution of the IVP
\begin{align}
    ^CD_{a^+}^{\alpha} [x](t)&=\lambda x(t)+f(t),\quad \lambda\in\mathbb{R},\ 0<\alpha\leq 1,\ t\geq a,\\
    x(a)&=x_a,
\end{align}
can be given by
\begin{equation}
    x(t)=x_aE_\alpha(\lambda (t-a)^\alpha)+\int_a^t(t-s)^{\alpha-1}E_{\alpha,\alpha}(\lambda (t-s)^\alpha)f(s)ds.
\end{equation}
\end{theorem}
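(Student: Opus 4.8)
The plan is to verify directly that the displayed expression solves the initial value problem and then invoke uniqueness. Since the Caputo derivative is translation covariant, I would first reduce to the case $a=0$ via the substitution $\tilde x(\tau)=x(\tau+a)$, so that $^CD_{a^+}^\alpha[x](t)={^CD}_{0^+}^\alpha[\tilde x](t-a)$. Writing the candidate as $x=x_h+p$ with homogeneous part $x_h(t)=x_aE_\alpha(\lambda t^\alpha)$ and particular part $p(t)=\int_0^t(t-s)^{\alpha-1}E_{\alpha,\alpha}(\lambda(t-s)^\alpha)f(s)\,ds$, both pieces are continuous on $[0,\infty)$ (the kernel $\tau^{\alpha-1}E_{\alpha,\alpha}(\lambda\tau^\alpha)$ is locally integrable and $f$ is continuous). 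Since $E_\alpha(0)=1/\Gamma(1)=1$ and $p(0)=0$, one gets $x(0)=x_a$, so the initial condition holds.

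The key reformulation is the equivalence between the Caputo IVP and the Volterra integral equation $x(t)=x_a+I_{0^+}^\alpha[\lambda x+f](t)$, which rests on the composition identities $I_{0^+}^\alpha\,{^CD}_{0^+}^\alpha[x]=x-x(0)$ and $^CD_{0^+}^\alpha I_{0^+}^\alpha[y]=y$, valid under the regularity recorded in Section \ref{sect2}. It therefore suffices to check that $x_h=x_a+\lambda I_{0^+}^\alpha[x_h]$ and $p=\lambda I_{0^+}^\alpha[p]+I_{0^+}^\alpha[f]$ separately, since adding these two identities returns exactly $x=x_a+I_{0^+}^\alpha[\lambda x+f]$.

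Here is where formulas \eqref{form2.2} and \eqref{form2.3} do the work. For the homogeneous part, applying \eqref{form2.2} with $\gamma=1,\ \beta=\alpha$ gives $I_{0^+}^\alpha[E_\alpha(\lambda s^\alpha)](t)=t^\alpha E_{\alpha,\alpha+1}(\lambda t^\alpha)$, and \eqref{form2.3} (with $\beta=1$) rewrites $\lambda t^\alpha E_{\alpha,\alpha+1}(\lambda t^\alpha)=E_\alpha(\lambda t^\alpha)-1$, which is precisely $\lambda I_{0^+}^\alpha[x_h]=x_h-x_a$. For the particular part I would use associativity of the convolution (justified by Fubini's theorem, all the kernels involved being locally integrable and $f$ locally bounded) to reduce $p=\lambda I_{0^+}^\alpha[p]+I_{0^+}^\alpha[f]$ to the single kernel identity
\[
\tau^{\alpha-1}E_{\alpha,\alpha}(\lambda\tau^\alpha)=\frac{\tau^{\alpha-1}}{\Gamma(\alpha)}+\lambda I_{0^+}^\alpha[s^{\alpha-1}E_{\alpha,\alpha}(\lambda s^\alpha)](\tau);
\]
evaluating the last integral by \eqref{form2.2} with $\gamma=\beta=\alpha$ (it equals $\tau^{2\alpha-1}E_{\alpha,2\alpha}(\lambda\tau^\alpha)$) and simplifying with \eqref{form2.3} (now with $\beta=\alpha$) closes the identity.

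Finally, having shown the candidate solves the Volterra equation and hence the IVP, I would appeal to the existence-and-uniqueness statement cited in the remark after Theorem \ref{CongThm} (the Lipschitz condition \eqref{Lip} holds here with the constant $L(t)=\abs{\lambda}$) to conclude that this is \emph{the} solution. The main obstacle I anticipate is not the two Mittag--Leffler identities, which are handed to us, but the bookkeeping in the equivalence step and in the convolution manipulation: one must confirm that the candidate possesses a Caputo derivative (i.e.\ lies in ${^CAC}_{0^+}^\alpha$) on which $^CD_{0^+}^\alpha$ genuinely inverts $I_{0^+}^\alpha$, and must justify the Fubini interchange underlying the convolution associativity for the weakly singular kernel $\tau^{\alpha-1}$.
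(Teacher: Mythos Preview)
The paper does not supply its own proof of this statement: it merely records the result and refers the reader to \cite[Theorem~5.15]{Kilbas}. Your argument---reducing to the equivalent Volterra integral equation and verifying that the homogeneous and particular parts satisfy it via the Mittag--Leffler identities \eqref{form2.2} and \eqref{form2.3}, then invoking uniqueness---is correct and self-contained; it is essentially the classical derivation one finds in the cited literature, so there is nothing to compare.
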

We will also use the following result:
\begin{theorem}\label{solLinDir}
Let $f:(-\infty,b]\to\mathbb{R}$ be a continuous function. Then, the solution $x\in C((-\infty,b),\mathbb{R})$ of the IVP
\begin{align*}
    D_{b^-}^{\alpha} [x](t)&=f(t) x(t),\quad \ 0<\alpha\leq 1,\ t<b,\\
    I_{b^-}^{1-\alpha}[x](b)&=x_b,
\end{align*}
can be given by
\begin{equation}
    x(t)=\frac{x_b}{\Gamma(\alpha)}\sum_{k=0}^\infty T^k_f[(b-s)^{\alpha-1}](t),\quad t<b,
\end{equation}
where $T_f^0[\phi]=\phi$ and $T_f^{k+1}[\phi]=T_f[T_f^k\phi]$ $(k\in\mathbb{N})$, with $T_f$ being the operator defined by $T_f[\phi]=I_{b^-}^{\alpha}[f\phi]$. 
\end{theorem}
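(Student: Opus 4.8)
The plan is to reduce the initial value problem to an equivalent Volterra integral equation and then to solve that equation by the method of successive approximations, identifying the resulting Neumann series with the claimed formula. First I would apply the right Riemann--Liouville integral $I_{b^-}^{\alpha}$ to both sides of $D_{b^-}^{\alpha}[x](t)=f(t)x(t)$. Using the right analogue of the standard composition identity,
$$I_{b^-}^{\alpha}D_{b^-}^{\alpha}[x](t)=x(t)-\frac{I_{b^-}^{1-\alpha}[x](b)}{\Gamma(\alpha)}(b-t)^{\alpha-1},$$
together with the initial condition $I_{b^-}^{1-\alpha}[x](b)=x_b$, the IVP becomes the fixed-point equation $x=x_0+T_f[x]$ with $x_0(t)=\frac{x_b}{\Gamma(\alpha)}(b-t)^{\alpha-1}$. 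Iterating the affine map $x\mapsto x_0+T_f[x]$ from the initial guess $x_0$ and using the linearity of $T_f$ produces, formally, exactly $\frac{x_b}{\Gamma(\alpha)}\sum_{k=0}^{\infty}T_f^{k}[(b-s)^{\alpha-1}](t)$.

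The principal obstacle, which I expect to be the heart of the proof, is to establish convergence of this series and to justify the term-by-term manipulations. The key is the inductive estimate
$$\left|T_f^{k}[(b-s)^{\alpha-1}](t)\right|\leq\frac{M^{k}\Gamma(\alpha)}{\Gamma((k+1)\alpha)}(b-t)^{(k+1)\alpha-1},\qquad M=\max_{[c,b]}|f|,$$
valid for $t$ in a compact subinterval $[c,d]\subset(-\infty,b)$. The inductive step reduces, after the substitution $u=(s-t)/(b-t)$, to the Beta integral $\int_t^b(s-t)^{\alpha-1}(b-s)^{(k+1)\alpha-1}\,ds=(b-t)^{(k+2)\alpha-1}B(\alpha,(k+1)\alpha)$, and the factor $\Gamma(\alpha)\Gamma((k+1)\alpha)/\Gamma((k+2)\alpha)$ that emerges is precisely what advances the bound. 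Summing the bound majorises the series by the integrable function $G(s)=\Gamma(\alpha)(b-s)^{\alpha-1}E_{\alpha,\alpha}(M(b-s)^{\alpha})$, which is finite because the Mittag--Leffler function is entire. On $[c,d]$ the factor $(b-t)^{\alpha-1}$ stays bounded, so the series converges uniformly there and defines a continuous function; moreover $G\in L^1([t,b])$, so dominated convergence legitimises interchanging the integral operator $T_f$ with the infinite sum even on intervals reaching up to $b$.

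With convergence in hand I would verify that $x(t)=\frac{x_b}{\Gamma(\alpha)}\sum_{k\geq0}T_f^{k}[(b-s)^{\alpha-1}](t)$ is a genuine solution. Uniform convergence on compact subsets of $(-\infty,b)$ gives $x\in C((-\infty,b),\mathbb{R})$, while the interchange above yields $T_f[x]=\sum_{k\geq1}\frac{x_b}{\Gamma(\alpha)}T_f^{k}[(b-s)^{\alpha-1}]=x-x_0$, so $x$ satisfies the integral equation $x=x_0+I_{b^-}^{\alpha}[fx]$. Applying $D_{b^-}^{\alpha}$ and using $D_{b^-}^{\alpha}[(b-s)^{\alpha-1}]=0$ together with $D_{b^-}^{\alpha}I_{b^-}^{\alpha}[fx]=fx$ recovers the differential equation. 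Finally, applying $I_{b^-}^{1-\alpha}$ and using the right power rule $I_{b^-}^{1-\alpha}[(b-s)^{\alpha-1}](t)=\Gamma(\alpha)$ with the semigroup property $I_{b^-}^{1-\alpha}I_{b^-}^{\alpha}=I_{b^-}^{1}$ gives $I_{b^-}^{1-\alpha}[x](t)=x_b+\int_t^b f(s)x(s)\,ds\to x_b$ as $t\to b^-$, confirming the initial condition.
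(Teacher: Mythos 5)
Your argument is correct, but it takes a genuinely different route from the paper. The paper's proof is a two-line citation: it invokes the duality results for left and right fractional operators (Caputo--Torres) to transfer the successive-approximation representation of Denton and Vatsala --- proved there for the \emph{left} Riemann--Liouville linear equation --- to the right-sided setting. You instead prove the right-sided statement directly: you reduce the IVP to the Volterra fixed-point equation $x=x_0+T_f[x]$ with $x_0(t)=\frac{x_b}{\Gamma(\alpha)}(b-t)^{\alpha-1}$ via the composition identity for $I_{b^-}^{\alpha}D_{b^-}^{\alpha}$, and then control the Neumann series by the inductive Beta-function estimate, majorising it by $\Gamma(\alpha)(b-t)^{\alpha-1}E_{\alpha,\alpha}(M(b-t)^{\alpha})$; your induction, the $k=0$ base case, and the bookkeeping of the Gamma factors all check out. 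In effect you have reproved the cited Denton--Vatsala theorem, transplanted to the right-sided operators. The paper's route buys brevity and reuse of the literature; yours buys self-containedness and an explicit treatment of the singularity at $t=b$ and of the initial condition $I_{b^-}^{1-\alpha}[x](b)=x_b$ --- details that the duality argument leaves to the reader to verify (one must check that the duality map carries the left-sided initial condition and series onto the right-sided ones). One small point: since the theorem asserts a formula for \emph{the} solution, a complete write-up should also record uniqueness for the Volterra equation (a standard contraction or fractional Gronwall argument using the same estimates), which your fixed-point setup delivers with essentially no extra work.
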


\begin{proof}
We only need to invoke the duality results of \cite{Caputo} and apply them to \cite[Theorem 2.3]{Denton}.
\end{proof}

\section{Applications}\label{ApplicSect}

\subsection{Two direct consequences of Theorem \ref{thm1.3} and a related result}

We start by stating a generalization of Theorem \ref{thm1.3}.
\begin{theorem}\label{thm3.77}
Let $0<\alpha< 1$ and $f\in C([a,\infty)\times\mathbb{R},\mathbb{R})$ be such that $f(t,0)=0$ and it satisfies the Lipschitz condition \eqref{Lip}. Then the solution of the IVP 
\begin{equation*}
    ^CD_{a^+}^{\alpha} [x](t)=f(t,x(t)),\ x(a)=x_a>0,
\end{equation*}
is positive on $[a,\infty)$.
\end{theorem}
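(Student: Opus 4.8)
The plan is to reduce Theorem \ref{thm3.77} to the already-established Theorem \ref{CongThm}. The key observation is that $f(t,0)=0$ means the constant function $x\equiv 0$ is itself a solution of the fractional differential equation $^CD_{a^+}^{\alpha}[x](t)=f(t,x(t))$, since $^CD_{a^+}^{\alpha}[0](t)=0=f(t,0)$. This gives us a second, explicitly known solution to compare against.

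First I would set $x_1(t)$ to be the solution starting from the initial value $x_1(a)=0$, which is precisely the zero solution $x_1\equiv 0$, and let $x_2(t)$ be the solution of the IVP in the statement with $x_2(a)=x_a>0$. Since $x_{1a}=0\neq x_a=x_{2a}$ and the function $f$ is continuous and satisfies the Lipschitz condition \eqref{Lip} by hypothesis, Theorem \ref{CongThm} applies directly and yields $x_1(t)\neq x_2(t)$ for all $t\in[a,\infty)$. In other words, $x_2(t)\neq 0$ on $[a,\infty)$.

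It remains to upgrade the statement ``$x_2(t)\neq 0$'' to ``$x_2(t)>0$''. Here I would invoke continuity: the solution $x_2$ is continuous on $[a,\infty)$ (existence and uniqueness of continuous solutions is guaranteed by the remark following Theorem \ref{CongThm}), and it satisfies $x_2(a)=x_a>0$. A continuous real-valued function that never vanishes on an interval and is positive at one point of that interval must be positive throughout, by the intermediate value theorem. Thus $x_2(t)>0$ for all $t\in[a,\infty)$, which is the desired conclusion.

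The argument is essentially routine once the correct comparison solution is identified, so I do not anticipate a genuine obstacle; the only point requiring care is the verification that $x\equiv 0$ is a legitimate solution, i.e. that the zero function lies in the relevant solution class and that its left Caputo derivative indeed vanishes. This is immediate from the definition of the Caputo derivative since $0-0(a)=0\in {^CAC}_{a^+}^{\alpha}([a,\infty),\mathbb{R})$ and $D_{a^+}^{\alpha}[0]=0$. With that in hand, the hypothesis $f(t,0)=0$ does all the work, and Theorem \ref{CongThm} closes the proof.
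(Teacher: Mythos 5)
Your argument is correct and is exactly the paper's proof: the paper likewise observes that $f(t,0)=0$ makes $x\equiv 0$ a solution with initial value $0$ and then applies Theorem \ref{CongThm}. You merely spell out the final continuity/intermediate-value step that the paper leaves implicit.
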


\begin{proof}
Just observe that the trivial solution $x(t)=0$ solves the IVP $^CD_a^{\alpha} [x](t)=f(t,x(t)),\ x(a)=0$ on $[a,\infty)$ and apply Theorem \ref{CongThm}.
\end{proof}

\begin{example}
Consider $f(t,x)=g(t)\ln(x^2+1)$  with $g\in C(\mathbb{R}_0^+,\mathbb{R})$ and $x\in\mathbb{R}$.

We have,
$$|f(t,x)-f(t,y)|=|g(t)||\ln(x^2+1)-\ln(y^2+1)|\leq|g(t)||x-y|,\quad x,y\in\mathbb{R},$$
where we have used the mean value theorem. Since $f(t,0)=0$, it follows from Theorem \ref{thm3.77} that the solution of
\begin{equation*}
    ^CD_{0^+}^{\alpha} [x](t)=g(t)\ln(x^2(t)+1),\ x(0)=1,\ t\geq 0,
\end{equation*}
is positive.
\end{example}
The following result seems to be new in the literature.
\begin{theorem}
Let $x_a>0$. For $g\in C([a,\infty],\mathbb{R})$ and $0<\alpha<1$ define
$$k(t,s)=\frac{1}{\Gamma{(\alpha)}}(t-s)^{\alpha-1}g(s),$$
and  the $j$th iterated kernel $k_j$ for $j=1,2,\ldots $ via the recurrence relation
$$k_1(t,s)=k(t,s),\quad k_j(t,s)=\int_s^tk(t,\tau)k_{j-1}(\tau,s)d\tau,\ j=2,3,\ldots .$$
Then the function
$$x(t)=x_a\left(1+\int_a^tR(t,s)ds\right),\quad t\in[a,\infty),$$
where $R(t,s)=\sum_{j=1}^\infty k_j(t,s)$ is positive.
\end{theorem}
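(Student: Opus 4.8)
The plan is to recognise that the function in the statement is nothing other than the solution of the linear Caputo initial value problem \eqref{IVPCap}, expressed through the resolvent kernel of an associated weakly singular Volterra equation, and then to read off positivity from Theorem \ref{thm1.3}.

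First I would reduce the differential problem to an integral one. Applying $I_{a^+}^{\alpha}$ to the equation ${}^CD_{a^+}^{\alpha}[x](t)=g(t)x(t)$ and using the composition rule for the Caputo derivative together with the initial datum $x(a)=x_a$, the IVP \eqref{IVPCap} is seen to be equivalent to the linear Volterra integral equation of the second kind
$$x(t)=x_a+\frac{1}{\Gamma(\alpha)}\int_a^t(t-s)^{\alpha-1}g(s)x(s)\,ds=x_a+\int_a^t k(t,s)x(s)\,ds,$$
with $k$ as defined in the statement. Existence and uniqueness of a continuous solution is already at our disposal (cf. the Remark following Theorem \ref{CongThm}).

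Next I would solve this Volterra equation by iteration. Writing $(Kx)(t)=\int_a^t k(t,s)x(s)\,ds$, the Neumann series gives the formal solution $x=\sum_{j\ge 0}K^j(x_a)$, and a routine change of order of integration shows that the kernel of $K^j$ is exactly the iterated kernel $k_j$ from the statement. Since the forcing term $x_a$ is constant, gathering the terms produces
$$x(t)=x_a+\int_a^t\Bigl(\sum_{j=1}^\infty k_j(t,s)\Bigr)x_a\,ds=x_a\Bigl(1+\int_a^t R(t,s)\,ds\Bigr),$$
which is precisely the proposed function.

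The main obstacle is the convergence of the resolvent series $R=\sum_j k_j$, since the kernel $k$ is weakly singular ($\alpha-1\in(-1,0)$) and the classical bounded-kernel theory does not apply verbatim. I would handle it by the standard inductive estimate: on a compact subinterval $[a,T]$, with $M=\max_{[a,T]}|g|$, one proves $|k_j(t,s)|\le M^j(t-s)^{j\alpha-1}/\Gamma(j\alpha)$ by means of the Beta integral $\int_s^t(t-\tau)^{\alpha-1}(\tau-s)^{(j-1)\alpha-1}\,d\tau=(t-s)^{j\alpha-1}\Gamma(\alpha)\Gamma((j-1)\alpha)/\Gamma(j\alpha)$. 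Summing and using $\sum_{j\ge 1}z^j/\Gamma(j\alpha)=zE_{\alpha,\alpha}(z)$ yields $\sum_j|k_j(t,s)|\le M(t-s)^{\alpha-1}E_{\alpha,\alpha}(M(t-s)^\alpha)$, so the series converges absolutely, the singularity stays integrable, and the interchange of summation and integration above is justified on every $[a,T]$. Having thereby identified the proposed function with the unique solution of \eqref{IVPCap}, its positivity on $[a,\infty)$ follows at once from Theorem \ref{thm1.3}. I would stress that this last step is where the real content lies: because $g$ may change sign, neither $k$, nor the $k_j$, nor $R$ need be nonnegative, so positivity of $x$ is \emph{not} visible from the representation itself and is genuinely supplied by Theorem \ref{thm1.3}.
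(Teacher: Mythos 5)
Your proposal is correct and follows essentially the same route as the paper: identify the resolvent-series function with the unique solution of \eqref{IVPCap} and then invoke Theorem \ref{thm1.3} for positivity. The only difference is that the paper delegates the identification step to a citation of \cite[Theorem 7.10]{Diethelm}, whereas you derive the Neumann-series representation and its convergence directly.
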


\begin{proof}
This result follows from the representation for the solution of \eqref{IVPCap} given in \cite[Theorem 7.10]{Diethelm}.
\end{proof}

\begin{remark}
Observe that the previous result is by no means obvious as the function $g$ may be negative.
\end{remark}

We end this section with a lateral but nevertheless interesting result, namely, we deduce a Bernoulli-type inequality using the theory of fractional calculus\footnote{We have never seen such type of result in the literature.}. The proof follows the same lines as the one in \cite{Ferreira1} (see also \cite{Ferreira2}), which was done using discrete fractional operators.

\begin{theorem}[Fractional Bernoulli's inequality]\label{ThmBern}
Let $\lambda\in\mathbb{R}$ and $0<\alpha\leq 1$. Then the following inequality holds:
\begin{equation}\label{BernIneq}
    E_{\alpha}(\lambda t^\alpha)\geq\frac{\lambda t^\alpha}{\Gamma(\alpha+1)}+1,\quad t\geq 0.
\end{equation}
\end{theorem}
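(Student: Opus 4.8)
The plan is to reduce the claimed inequality \eqref{BernIneq} to the nonnegativity of a single Mittag--Leffler term, by using the recursion \eqref{form2.3} to strip off the first two terms of the series defining $E_\alpha$. Writing $z=\lambda t^\alpha$ and recalling that $E_\alpha(z)=E_{\alpha,1}(z)$ together with $\Gamma(1)=1$, I would first apply \eqref{form2.3} with $\beta=1$ to obtain $E_\alpha(z)=1+zE_{\alpha,\alpha+1}(z)$.

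Next I would apply \eqref{form2.3} once more, this time with $\beta=\alpha+1$, to the remaining factor $E_{\alpha,\alpha+1}(z)$, which gives $E_{\alpha,\alpha+1}(z)=\frac{1}{\Gamma(\alpha+1)}+zE_{\alpha,2\alpha+1}(z)$. Substituting this back and returning to $z=\lambda t^\alpha$ produces the exact identity
\[
E_\alpha(\lambda t^\alpha)-1-\frac{\lambda t^\alpha}{\Gamma(\alpha+1)}=(\lambda t^\alpha)^2\,E_{\alpha,2\alpha+1}(\lambda t^\alpha).
\]
It then remains only to check that the right-hand side is nonnegative. The factor $(\lambda t^\alpha)^2$ is a square, hence $\geq 0$ for every $\lambda\in\mathbb{R}$ and $t\geq 0$, while $E_{\alpha,2\alpha+1}(\lambda t^\alpha)\geq 0$ by the positivity property \eqref{ineqM-L}, whose hypotheses $0<\alpha\leq 1$ and $\beta=2\alpha+1\geq\alpha$ are both met. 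Multiplying these two nonnegative quantities yields \eqref{BernIneq}.

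The only delicate point is the sign of the Mittag--Leffler factor. For $\lambda\geq 0$ the inequality is immediate already from the defining series, since every coefficient of $E_\alpha$ is positive; the genuine content lies in the case $\lambda<0$, where the argument $\lambda t^\alpha$ is negative and the nonnegativity of $E_{\alpha,2\alpha+1}$ there is not automatic but depends on \eqref{ineqM-L}. This is exactly why it is worth peeling off \emph{two} terms rather than one: doing so isolates the even power $(\lambda t^\alpha)^2$, which is nonnegative regardless of the sign of $\lambda$, and leaves the second index at $2\alpha+1$, comfortably inside the admissible range $\beta\geq\alpha$ demanded by \eqref{ineqM-L}. I expect this bookkeeping---verifying the parameter condition and invoking \eqref{ineqM-L} for the possibly negative argument---to be the main (and essentially the only) obstacle, the rest being a direct algebraic consequence of \eqref{form2.3}.
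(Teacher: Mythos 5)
Your proof is correct, but it takes a genuinely different route from the paper. You peel off the first two terms of the series via two applications of \eqref{form2.3}, arriving at the exact identity $E_\alpha(\lambda t^\alpha)-1-\frac{\lambda t^\alpha}{\Gamma(\alpha+1)}=(\lambda t^\alpha)^2 E_{\alpha,2\alpha+1}(\lambda t^\alpha)$, and then conclude by the nonnegativity \eqref{ineqM-L} (with $\beta=2\alpha+1\geq\alpha$) together with the evenness of the square factor; your identification of $\lambda<0$ as the only nontrivial case is also accurate. The paper instead runs a fractional-ODE comparison argument: it sets $x(t)=\lambda t^\alpha/\Gamma(\alpha+1)$, observes that ${}^CD_{0^+}^{\alpha}x=\lambda\leq\lambda x+\lambda$, and uses the variation of constants formula (Theorem \ref{VCF}) plus \eqref{ineqM-L} and \eqref{form2.2} to bound $x(t)$ by $\lambda t^\alpha E_{\alpha,\alpha+1}(\lambda t^\alpha)$, finishing with a single application of \eqref{form2.3}. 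Both proofs ultimately rest on the same nontrivial input, the Miller--Samko positivity \eqref{ineqM-L} for possibly negative arguments. Your argument is shorter and entirely elementary (pure series manipulation), which is a real gain in transparency; what it gives up is the point the paper is making, namely that the inequality can be \emph{derived from} fractional calculus by the comparison-function technique of \cite{Ferreira1}, a method that is more flexible when one wants analogous bounds in settings where a clean two-term series identity is not available.
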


\begin{proof}
The formula trivially holds for $t=0$.
Let 
$$x(t)=\lambda\frac{t^\alpha}{\Gamma(\alpha+1)},\quad t\geq 0.$$
We have $x(0)=0$ and $^CD_{0^+}^{\alpha} x(t)=\lambda$ (cf. \cite[Property 2.1]{Kilbas}). Therefore,
$$\lambda x(t)+\lambda=\lambda^2\frac{t^\alpha}{\Gamma(\alpha+1)}+\lambda\geq {^CD}_0^{\alpha} x(t).$$
Define $m(t)=\lambda x(t)+\lambda-{^CD}_{0^+}^{\alpha} x(t)$, which is nonnegative. Then, using Theorem \ref{VCF}, we get
\begin{align*}
    x(t)&=\int_0^t(t-s)^{\alpha-1}E_{\alpha,\alpha}(\lambda (t-s)^\alpha)[\lambda-m(s)]ds\\
    &=\lambda\int_0^t(t-s)^{\alpha-1}E_{\alpha,\alpha}(\lambda (t-s)^\alpha)ds-\int_0^t(t-s)^{\alpha-1}E_{\alpha,\alpha}(\lambda (t-s)^\alpha)m(s)ds\\
    &\leq \lambda\int_0^t(t-s)^{\alpha-1}E_{\alpha,\alpha}(\lambda (t-s)^\alpha)ds\\
    &=\lambda t^\alpha E_{\alpha,\alpha+1}(\lambda t^\alpha),
\end{align*}
where we have used \eqref{ineqM-L} and \eqref{form2.2}. Therefore,
$$\lambda\frac{t^\alpha}{\Gamma(\alpha+1)}\leq \lambda t^\alpha E_{\alpha,\alpha+1}(\lambda t^\alpha),$$
which for $t>0$ is equivalent to
$$\frac{\lambda}{\Gamma(\alpha+1)}\leq \lambda E_{\alpha,\alpha+1}(\lambda t^\alpha),$$
and upon using \eqref{form2.3} and some rearrangements furnishes \eqref{BernIneq}.
\end{proof}

\begin{remark}
It is clear that, for $\alpha=1$, inequality \eqref{BernIneq} reads as $e^{\lambda t}\geq \lambda t+1$. This inequality is the continuous version of the Bernoulli inequality\footnote{That is, $(1+x)^n\geq 1+nx$ for $x>-1$ and $n\in\mathbb{N}$.} (cf. \cite{Agarwal}), hence the name given in Theorem \ref{ThmBern}.
\end{remark}

\subsection{Herglotz's variational problem}

In this section we will deduce necessary optimality conditions for a fractional variational problem of Herglotz type \cite{Georgieva,Herglotz} and, in particular, show the usefulness of Theorem \ref{thm1.5}. Let us first state what we mean here by the Herglotz variational problem in the classical case:  Following \cite[Problem $P_H$]{Santos}, we consider:
\begin{align}
    z(b)&\longrightarrow \min\nonumber\\
    \mbox{subject to}\ \dot z(t)&=L(t,x(t),\dot x(t),z(t)),\ t\in[a,b],\label{ClassHerglotzprobl}\\
     x(a)=&x_a,\ z(a)=z_a,\quad x_a,z_a\in\mathbb{R}.\nonumber
\end{align}
The aim is to find a couple $(x,z)$ in an appropriate space of functions that solves \eqref{ClassHerglotzprobl}, i.e. that satisfy all the conditions in \eqref{ClassHerglotzprobl}.

In our work we will consider the following fractional version of \eqref{ClassHerglotzprobl}:
\begin{align}
    z(b)&\longrightarrow \min\nonumber\\
    \mbox{subject to}\ {^CD}_{a^+}^\alpha[z](t)&=L(t,x(t),{^CD}_{a^+}^\alpha[x](t),z(t)),\ t\in[a,b]\label{FracHerglotzprobl},\\
     x(a)=&x_a,\ z(a)=z_a,\quad x_a,z_a\in\mathbb{R}.\nonumber
\end{align}
Before proceeding let us just note that, if the function $L$ does not depend on its fourth variable, then $z$ is immediately determined and we may write \eqref{FracHerglotzprobl} as
\begin{align*}
    \frac{1}{\Gamma(\alpha)}\int_a^b(b-t)^{\alpha-1}&\left[ L(t,x(t),{^CD}_{a^+}^\alpha[x](t))+\frac{\Gamma(\alpha)z_a(b-t)^{1-\alpha}}{b-a}\right]dt\longrightarrow \min\\
    &\mbox{subject to}\
     x(a)=x_a,\quad x_a\in\mathbb{R},
\end{align*}
in view of $I_{a+}^{\alpha}[{^CD}_{a^+}^\alpha[z]](t)=z(t)-z(a)$. So, if we define $$\hat{L}(t,x,v)=L(t,x,v)+\frac{\Gamma(\alpha)z_a(b-t)^{1-\alpha}}{b-a},$$
we get the problem
\begin{align*}
    \mathcal{L}(x)=\frac{1}{\Gamma(\alpha)}\int_a^b(b-t)^{\alpha-1}& \hat{L}(t,x(t),{^CD}_{a^+}^\alpha[x](t))dt\longrightarrow \min\\
    &\mbox{subject to}\
     x(a)=x_a,\quad x_a\in\mathbb{R}.
\end{align*}
This is the basic problem of the fractional calculus of variations with fixed initial condition. The same problem with fixed initial and final conditions was recently studied in \cite{Ferreira}. 

To the best of our knowledge the first work considering Herglotz-type problems involving fractional derivatives is \cite{Almeida}. However, the problem we consider here is different in nature from the one considered in \cite{Almeida} (when $0<\alpha<1$) as the authors considered the differential equation
$$\dot z(t)=L(t,x(t),{^CD}_{a^+}^\alpha[x](t),z(t)),$$
instead of ${^CD}_{a^+}^\alpha[z](t)=L(t,x(t),{^CD}_{a^+}^\alpha[x](t),z(t))$ considered above in \eqref{FracHerglotzprobl}. Moreover, the proofs of the necessary optimality conditions are quite different.

In this work we will obtain first and second order necessary otimality conditions for \eqref{FracHerglotzprobl} by using a recent result of \cite{Bourdin}, namely, the fractional version of the celebrated Pontryagin Maximum Principle (PMP). For the benefit of the reader we recall here the main result of \cite{Bourdin}.

Consider the Optimal Control Problem (OCP) of Bolza type given by\footnote{In the notation of \cite{Bourdin} we considering here $\beta=\alpha$.}
\begin{align*}
    \varphi(x(a),x(b))+I_{a^+}^\alpha&[F(\cdot,x,u)](b)\longrightarrow \min\\
    \mbox{subject to}\ x\in {^CA}C_{a^+}^{\alpha}&([a,b],\mathbb{R}^n),\ u\in L^{\infty}([a,b],\mathbb{R}^m),\\
    {^CD}_{a^+}^\alpha[x](t)&=f(t,x(t),u(t))\ a.e.\ t\in[a,b],\\
     g(x(a)&,x(b))\in C,\\
     u(t)\in U&\ a.e.\ t\in[a,b].
\end{align*}
A couple $(x^\ast, u^\ast)$ is said to be an optimal solution to OCP if it satisfies all the above constraints and it minimizes the cost among all couples $(x, u)$ satisfying those constraints. Obviously, the functions involved in the OCP satisfy some regularity conditions, that we will skip here and refer the reader to \cite[Page 8]{Bourdin}. Under these hypothesis (regularity conditions), we have the following theorem.

\begin{theorem}[PMP]\label{ThmPMP}
Assume that $(x^\ast,u^\ast)\in {^CA}C_{a^+}^{\alpha}([a,b],\mathbb{R}^n)\times L^{\infty}([a,b],\mathbb{R}^m)$ is an optimal solution to the OCP. Then, there exists a nontrivial couple $(p,p^0)$, where $p\in AC_{b^-}^{\alpha}([a,b],\mathbb{R}^n)$ (called adjoint vector) and $p^0\leq 0$, such that the following conditions hold:
\begin{enumerate}[label=(\roman*)]
\item Fractional Hamiltonian system:
\begin{align*}
    {^CD}_{a^+}^\alpha[x^\ast](t)&=\partial_4 H(t,x^\ast(t),u^\ast(t),p(t),p^0)\\
    D_{b^-}^\alpha[p](t)&=\partial_2 H(t,x^\ast(t),u^\ast(t),p(t),p^0),
\end{align*}
for almost every $t\in[a,b]$, where the Hamiltonian $H:[a,b)\times\mathbb{R}^n\times\mathbb{R}^m\times\mathbb{R}^n\times\mathbb{R}\to\mathbb{R}$ associated to
Problem (OCP) is defined by
$$H(t,x,u,p,p^0)=\langle p,f(t,x,u)\rangle_{\mathbb{R}^n}+p^0\frac{(b-t)^{\alpha-1}}{\Gamma(\alpha)}F(t,x,u).$$
\item Hamiltonian maximization condition:
$$u^\ast(t)=\arg\max_{u\in U} H(t,x^\ast(t),u,p(t),p^0)\ a.e.\ t\in[a,b].$$
\item Transversality conditions on the adjoint vector: if in addition $g$ is submersive\footnote{A function $g : \mathbb{R}^n\times\mathbb{R}^n \to \mathbb{R}^j$ is said to be submersive at a point $(x_a,x_b)\in\mathbb{R}^n\times\mathbb{R}^n$ if its differential at this point is surjective.} at $(x^\ast(a),x^\ast(b)$, then the couple $(p,p^0)$ satisfy
\begin{align*}
    I_{b^-}^{1-\alpha}[p](a)&=-p^0\partial_1\varphi(x^\ast(a),x^\ast(b))-\partial_1 g(x^\ast(a),x^\ast(b))^T\times\Psi,\\
    I_{b^-}^{1-\alpha}[p](b)&=p^0\partial_2\varphi(x^\ast(a),x^\ast(b))+\partial_2 g(x^\ast(a),x^\ast(b))^T\times\Psi,
\end{align*}
where $\Psi\in\mathcal{N}_C[g(x^\ast(a),x^\ast(b))]$, with $\mathcal{N}_C[x]=\{z\in\mathbb{R}^j:\forall x'\in C, \langle z,x'-x\rangle_{\mathbb{R}^j}\}\leq 0\}$.
\end{enumerate}
\end{theorem}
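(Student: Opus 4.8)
The plan is to follow the classical route to the Pontryagin Maximum Principle, adapted to the left-Caputo/right-Riemann--Liouville fractional framework, with the fractional duality of \cite{Caputo} and the variation of constants formula (Theorem \ref{VCF}) playing the roles that the exponential flow and ordinary integration by parts play in the integer-order case. Because the admissible controls live only in $L^\infty$, pointwise perturbations must be introduced through needle (spike) variations rather than through smooth variations of $u^\ast$.

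First I would fix a Lebesgue point $\tau\in[a,b)$ of the maps $t\mapsto f(t,x^\ast(t),u^\ast(t))$ and $t\mapsto F(t,x^\ast(t),u^\ast(t))$ and a value $v\in U$, and define for small $\epsilon>0$ the needle variation $u_\epsilon$ equal to $v$ on $[\tau,\tau+\epsilon]$ and to $u^\ast$ elsewhere. Writing $x_\epsilon$ for the corresponding state, the heart of the argument is a sensitivity analysis: I would use the fractional Duhamel representation behind Theorem \ref{VCF} to show that $x_\epsilon$ is differentiable in $\epsilon$ at $0^+$ and that the variation $\xi=\lim_{\epsilon\to0^+}(x_\epsilon-x^\ast)/\epsilon$ solves the linearised Caputo equation ${}^CD_{a^+}^\alpha[\xi](t)=\partial_2 f(t,x^\ast,u^\ast)\xi(t)$ on $[\tau,b]$ with the jump datum produced by the spike, namely $\xi(\tau)=f(\tau,x^\ast(\tau),v)-f(\tau,x^\ast(\tau),u^\ast(\tau))$, and $\xi\equiv0$ before $\tau$. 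Establishing the differentiability of the state with respect to $\epsilon$ in this nonlocal setting is the step I expect to be the main obstacle, because the memory term in the fractional integral couples the whole past of the trajectory, so the usual Gronwall-type estimates have to be replaced by their Mittag--Leffler analogues.

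Next I would introduce the adjoint vector $p$ as the solution of the right Riemann--Liouville equation $D_{b^-}^\alpha[p](t)=\partial_2 H(t,x^\ast,u^\ast,p,p^0)$. The reason the adjoint is governed by the right derivative is the fractional integration-by-parts formula of \cite{Caputo}, which reads, up to boundary terms, $\int_a^b\langle D_{b^-}^\alpha[p],\xi\rangle\,dt=\int_a^b\langle p,{}^CD_{a^+}^\alpha[\xi]\rangle\,dt+[\text{terms in } I_{b^-}^{1-\alpha}[p] \text{ at } a,b]$. Applying this identity to the linearised equation for $\xi$ shows that the pairing $\langle p(t),\xi(t)\rangle$ transports the cost sensitivity from the endpoint $b$ back to the spike location $\tau$, so that the first-order change of the Bolza cost along the needle variation reduces to $H(\tau,x^\ast(\tau),v,p(\tau),p^0)-H(\tau,x^\ast(\tau),u^\ast(\tau),p(\tau),p^0)$ at the Lebesgue point $\tau$.

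Finally, optimality of $(x^\ast,u^\ast)$ forces a definite sign on this first-order change for every admissible $v\in U$, which upon unwinding the sign conventions (with $p^0\le0$) is exactly the Hamiltonian maximisation condition (ii); letting $\tau$ range over the full-measure set of Lebesgue points yields it a.e.\ on $[a,b]$. To incorporate the endpoint constraint $g(x^\ast(a),x^\ast(b))\in C$ and to produce the multipliers $p^0\le0$ and $\Psi\in\mathcal N_C[g(x^\ast(a),x^\ast(b))]$, I would apply Ekeland's variational principle to a penalised functional measuring the distance of $g(x(a),x(b))$ to $C$, run the same needle-variation analysis on the penalised problems, and pass to the limit; a convex separation (Hahn--Banach) of the first-order reachable set from $C$ then gives the transversality conditions (iii), the submersivity of $g$ guaranteeing that the constraint multiplier is well defined. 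The boundary contributions singled out by the duality formula are precisely $I_{b^-}^{1-\alpha}[p]$ evaluated at $a$ and $b$, which is why the transversality conditions are stated through these quantities rather than through $p$ itself.
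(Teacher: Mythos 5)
The first thing to say is that the paper contains no proof of this statement: Theorem \ref{ThmPMP} is explicitly \emph{recalled} from \cite{Bourdin} (``For the benefit of the reader we recall here the main result of \cite{Bourdin}''), so there is no in-paper argument to measure your proposal against. That said, your outline does track the strategy actually used in the cited source: needle variations at Lebesgue points, a sensitivity analysis of the state with respect to the perturbation parameter, an adjoint governed by the right Riemann--Liouville derivative via the left/right duality of \cite{Caputo}, and Ekeland's variational principle combined with a separation argument to produce the multipliers $(p^0,\Psi)$ and the transversality conditions expressed through $I_{b^-}^{1-\alpha}[p]$. So the route is the right one, and your explanation of \emph{why} the adjoint lives on the right-sided calculus and why the boundary data are $I_{b^-}^{1-\alpha}[p](a)$ and $I_{b^-}^{1-\alpha}[p](b)$ is correct and genuinely illuminating.

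There is, however, a concrete error in the one step you identify as the heart of the argument. In the nonlocal setting a spike on $[\tau,\tau+\epsilon]$ does \emph{not} produce a variation vector solving a Caputo equation restarted at $\tau$ with the pointwise datum $\xi(\tau)=f(\tau,x^\ast(\tau),v)-f(\tau,x^\ast(\tau),u^\ast(\tau))$. Writing the state equation in integral form, the spike contributes
$\frac{1}{\Gamma(\alpha)}\int_\tau^{\tau+\epsilon}(t-s)^{\alpha-1}\bigl[f(s,x,v)-f(s,x,u^\ast)\bigr]\,ds\approx \frac{\epsilon}{\Gamma(\alpha)}(t-\tau)^{\alpha-1}\bigl[f(\tau,\cdot,v)-f(\tau,\cdot,u^\ast)\bigr]$
for $t>\tau+\epsilon$, so the first-order variation $\xi$ is the solution of a linear Volterra equation driven by a \emph{weakly singular source} proportional to $(t-\tau)^{\alpha-1}$; it blows up as $t\to\tau^+$ rather than taking the jump value there. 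This changes both the function space in which $\xi$ lives and the form of the duality pairing with $p$ (one must check that the product of the two singularities, $(t-\tau)^{\alpha-1}$ from $\xi$ and $(b-t)^{\alpha-1}$ hidden in $p$, is integrable), and it is precisely where the Mittag--Leffler kernel $(t-\tau)^{\alpha-1}E_{\alpha,\alpha}(\cdot)$ of Theorem \ref{VCF} must replace your proposed initial-value formulation. Together with the unaddressed limit passage in the Ekeland scheme (convergence and nontriviality of the penalised multipliers), this leaves the proposal as a correct plan whose load-bearing steps are not carried out; for the purposes of this paper the appropriate course is the author's, namely to cite \cite{Bourdin} for the complete proof.
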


A series of remarks is in order.

\begin{remark}\label{rem3.2}
If $U=\mathbb{R}$, i.e., there is no control constraint in the OCP, and the Hamiltonian is differentiable with respect to its third variable, then the maximization condition (ii) in Theorem \ref{ThmPMP} implies (cf. \cite[Remark 3.18]{Bourdin})
\begin{equation*}
     \partial_3H(t,x^\ast(t),u^\ast(t),p(t),p^0)=0\ a.e.\ t\in[a,b].
\end{equation*}
Moreover, if $H$ is twice differentiable with respect to its third variable, we easilly see that
 \begin{equation} \label{ineqSo} 
    \partial_{33}H(t,x^\ast(t),u^\ast(t),p(t),p^0)\leq 0\ a.e.\ t\in[a,b].
\end{equation}
\end{remark}

\begin{remark}\label{rem3.3}
If the initial point is fixed and if the final point is free in the OCP, then we may take (cf. \cite[Remark 3.17]{Bourdin})
\begin{equation*}
     I_{b^-}^{1-\alpha}[p](b)=-\partial_2\varphi(x^\ast(a),x^\ast(b)).
\end{equation*}
\end{remark}

\begin{remark}\label{rem3.4}
It is mentioned in \cite[pag. 15]{Bourdin} and shown in \cite[Theorem 5.3]{Bourdin1} that $p\in (C[a,b),\mathbb{R}^n)$.
\end{remark}

It follows the main result of this section:

\begin{theorem}\label{mainHerglFract}
Consider the function $L(t,x,u,z)$ in \eqref{FracHerglotzprobl} to have continuous partial derivatives with respect to $x,\ u$ and $z$. Suppose that  $(x^\star,z^\star)\in{^CA}C_{a^+}^{\alpha}([a,b],\mathbb{R})\times {^CA}C_{a^+}^{\alpha}([a,b],\mathbb{R})$, with ${^CD}_{a^+}^\alpha[x^\star]\in C([a,b],\mathbb{R})$, solves the Herglotz problem \eqref{FracHerglotzprobl}. Then
\begin{equation}\label{FOCthm}
    I_{b^-}^{\alpha}[p \partial_2 L(\cdot,x^{\star},{^CD}_{a^+}^\alpha[x^\star],z^{\star})](t)+p(t)\partial_3 L(t,x^{\star}(t),{^CD}_{a^+}^\alpha[x^\star](t),z^{\star}(t))=0,
\end{equation}
for all $t\in[a,b)$, 
where $p$ is the solution of
$$D_{b^-}^\alpha[p](t)=p(t)\partial_4L(t,x^{\star}(t),{^CD}_{a^+}^\alpha[x^\star](t),z^{\star}(t)),\ t\in[a,b),\ I_{b^-}^{1-\alpha}[p](b)=-1.$$
Moreover, if $L$ is twice continuously differentiable with respect to $u$, then the Legendre necessary optimality condition holds:
\begin{equation}\label{Legendre}
\partial_{33}L(t,x^{\star}(t),{^CD}_{a^+}^\alpha[x^\star](t),z^{\star}(t))\geq 0,\ t\in[a,b].
\end{equation}
\end{theorem}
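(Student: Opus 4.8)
The plan is to recast the fractional Herglotz problem \eqref{FracHerglotzprobl} as an optimal control problem of Bolza type to which the PMP of Theorem \ref{ThmPMP} applies, and then to read off the optimality conditions. I would take the two-dimensional state $X=(x,z)$, the scalar control $u={^CD}_{a^+}^\alpha[x]$, and the dynamics ${^CD}_{a^+}^\alpha[x]=u$, ${^CD}_{a^+}^\alpha[z]=L(t,x,u,z)$, so that $f(t,(x,z),u)=(u,L(t,x,u,z))$. The cost $z(b)$ is encoded by $\varphi(X(a),X(b))=z(b)$ and $F\equiv 0$; the endpoint data become the fixed initial point $X(a)=(x_a,z_a)$ and a free final point. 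With $p=(p_1,p_2)$ and $F\equiv0$ the Hamiltonian reduces to $H(t,X,u,p,p^0)=p_1u+p_2L(t,x,u,z)$, and I would check that the regularity hypotheses of \cite{Bourdin} hold: $L$ is $C^1$ in $(x,u,z)$ and $u^\star={^CD}_{a^+}^\alpha[x^\star]\in C([a,b],\mathbb{R})\subset L^\infty([a,b],\mathbb{R})$.

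I would then transcribe the three conclusions of Theorem \ref{ThmPMP}. The costate equation $D_{b^-}^\alpha[p]=\partial_2 H$ splits into $D_{b^-}^\alpha[p_1]=p_2\,\partial_2 L$ and $D_{b^-}^\alpha[p_2]=p_2\,\partial_4 L$, the latter being precisely the linear equation for $p:=p_2$ in the statement. Because the initial point is fixed and the final point free, Remark \ref{rem3.3} gives the transversality $I_{b^-}^{1-\alpha}[p](b)=-\partial_2\varphi$, i.e. $I_{b^-}^{1-\alpha}[p_1](b)=0$ and $I_{b^-}^{1-\alpha}[p_2](b)=-1$ (the normalization $p^0=-1$ is legitimate, since $p^0=0$ would force $p_2\equiv0$ by uniqueness and then $p_1\equiv0$ by the maximization condition, contradicting nontriviality of $(p,p^0)$). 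Since $U=\mathbb{R}$, Remark \ref{rem3.2} turns the maximization condition into $\partial_3 H=p_1+p_2\,\partial_3 L=0$.

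To obtain \eqref{FOCthm} I would invert the costate equation for $p_1$. Applying $I_{b^-}^\alpha$ to $D_{b^-}^\alpha[p_1]=p_2\,\partial_2 L$ and using the right-sided composition identity $I_{b^-}^\alpha[D_{b^-}^\alpha[p_1]](t)=p_1(t)-\frac{(b-t)^{\alpha-1}}{\Gamma(\alpha)}I_{b^-}^{1-\alpha}[p_1](b)$ together with $I_{b^-}^{1-\alpha}[p_1](b)=0$ gives $p_1=I_{b^-}^\alpha[p\,\partial_2 L]$; substituting into $p_1+p\,\partial_3 L=0$ yields exactly \eqref{FOCthm} for $t\in[a,b)$. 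For the Legendre condition I would use that, when $L$ is $C^2$ in $u$, Remark \ref{rem3.2} gives $\partial_{33}H=p\,\partial_{33}L\leq 0$ a.e. The crucial point, and the reason Theorem \ref{thm1.5} is invoked, is the sign of $p$: the costate $p=p_2$ solves $D_{b^-}^\alpha[p]=p\,\partial_4 L$ with $I_{b^-}^{1-\alpha}[p](b)=-1<0$, so by Theorem \ref{thm1.5} (extending $g=\partial_4 L(\cdot,x^\star,u^\star,z^\star)$ continuously to the left of $a$, which does not alter the solution on $[a,b)$ because the right operators only integrate toward $b$) and Remark \ref{rem1.6} we obtain $p(t)<0$ on $[a,b)$. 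Dividing $p\,\partial_{33}L\leq0$ by the strictly negative $p$ flips the inequality to $\partial_{33}L\geq0$ on $[a,b)$, and continuity of $\partial_{33}L$ along the continuous functions $(x^\star,u^\star,z^\star)$ extends it to $t=b$.

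The main obstacle I expect is not any single computation but the sign step: without the strict negativity of the costate furnished by Theorem \ref{thm1.5}, the PMP only delivers $p\,\partial_{33}L\leq0$, which is insufficient to conclude the Legendre inequality. Secondary care is needed in matching conventions with \cite{Bourdin} (the meaning of $\partial_2,\partial_4$ as partials in the state and costate slots, the normalization of $p^0$, and the fact that Remark \ref{rem3.4} guarantees $p\in C([a,b),\mathbb{R})$ so that the pointwise identities make sense), and in justifying the right-sided composition formula under the regularity $p\in AC_{b^-}^\alpha$ provided by the PMP.
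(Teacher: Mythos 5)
Your proposal is correct and follows essentially the same route as the paper: recasting \eqref{FracHerglotzprobl} as the Bolza OCP with state $(x,z)$ and control $u={^CD}_{a^+}^\alpha[x]$, invoking Theorem \ref{ThmPMP} together with Remarks \ref{rem3.2}--\ref{rem3.4} to obtain \eqref{FOC}--\eqref{eqqq1}, recovering $p_1=I_{b^-}^{\alpha}[p_2\,\partial_2L(\cdot)]$ to get \eqref{FOCthm}, and using Theorem \ref{thm1.5} with Remark \ref{rem1.6} to fix the sign of $p_2$ and deduce \eqref{Legendre}. The extra details you supply (the normalization $p^0=-1$ via nontriviality, and the harmless extension of $\partial_4L$ to the left of $a$ before applying Theorem \ref{thm1.5}) are points the paper leaves implicit but do not change the argument.
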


\begin{proof}
Suppose that $(x^\star,z^\star)$ is a solution of \eqref{FracHerglotzprobl}. Then, by letting $x^{1\star}(t)=x^\star(t)$, $x^{2\star}(t)=z^\star(t)$ and $u^\star(t)={^CD}_{a^+}^\alpha[x^\star](t)$,  we conclude that $( x^{1\star}, x^{2\star},u^\star)$ solves the following OCP
\begin{align*}
    & x^{2}(b)\longrightarrow \min\\
    \mbox{subject to}&\ {^CD}_{a^+}^\alpha[x^{1}](t)=u(t),\ t\in[a,b],\\
    &\ {^CD}_{a^+}^\alpha[x^{2}](t)=L(t,x^{1}(t),u(t),x^{2}(t)),\ t\in[a,b],\\
     &\ x^{1}(a)=x_a,\ x^{2}(a)=z_a,\quad x_a,z_a\in\mathbb{R}.
\end{align*}
It follows from Theorem \ref{ThmPMP} and Remarks \ref{rem3.2}, \ref{rem3.3} and \ref{rem3.4} the existence of a vector $(p_1,p_2)\in C([a,b),\mathbb{R}^2)$ satisfying
\begin{equation}\label{FOC}
    p_1(t)+p_2(t)\partial_3L(t,x^{1\star}(t),u^\star(t),x^{2\star}(t))=0,\ a.e.\ t\in[a,b],
\end{equation}
and 
\begin{align}
     D_{b^-}^\alpha[p_1](t)&=p_2(t)\partial_2L(t,x^{1\star}(t),u^\star(t),x^{2\star}(t)),\ a.e.\ t\in[a,b],\ I_{b^-}^{1-\alpha}[p_1](b)=0,\label{eqqq0}\\
     D_{b^-}^\alpha[p_2](t)&=p_2(t)\partial_4L(t,x^{1\star}(t),u^\star(t),x^{2\star}(t)),\ a.e.\ t\in[a,b],\ I_{b^-}^{1-\alpha}[p_2](b)=-1.\label{eqqq1}
\end{align}
Observe that the continuity of $p_1$ and $p_2$ on $[a,b)$, together with the assumptions on $L$ and $(x^\star,u^\star)$, imply that \eqref{FOC}, \eqref{eqqq0} and \eqref{eqqq1} hold on $[a,b)$. Moreover, since $p_1(t)=I_{b^-}^{\alpha}[p_2\partial_2L(\cdot,x^{1\star},u^\star,x^{2\star})](t)$, then \eqref{FOCthm} follows from immediately \eqref{FOC}.

Suppose now that $L$ is  twice  continuously  differentiable  with  respect  to $u$. Then, by \eqref{ineqSo}, we get
 \begin{equation}\label{almLegen}
    p_2(t)\partial_{33}L(t,x^{1\star}(t),u^\star(t),x^{2\star}(t))\leq 0,\ a.e.\ t\in[a,b],
\end{equation}
and, upon using Theorem \ref{thm1.5}, Remark \ref{rem1.6} and the continuity of $p_2$ on $[a,b)$,
$$\partial_{33}L(t,x^{1\star}(t),u^\star(t),x^{2\star}(t))\geq 0,\ a.e.\ t\in[a,b].$$
The previous inequality holds on $[a,b]$ from the hypothesis on $L$ and $(x^\star,u^\star)$. The proof is done.
\end{proof}

We call \eqref{FOCthm} the \emph{Euler--Lagrange equation in integral form} for the Herglotz variational problem \eqref{FracHerglotzprobl}.

\begin{remark}
The function $p$ of Theorem \ref{mainHerglFract} has the representation (cf. Theorem \ref{solLinDir}):
$$p(t)=-\frac{1}{\Gamma(\alpha)}\sum_{k=0}^\infty T^k_f[(b-s)^{\alpha-1}](t),\quad t<b,$$
where $f(t)=\partial_4L(t,x^{\star}(t),{^CD}_{a^+}^\alpha[x^\star](t),z^{\star}(t))$.
\end{remark}

\begin{remark}
We emphasize the importance of Theorem \ref{thm1.5} in order to obtain the Legendre necessary condition \eqref{Legendre}. Because of it we were able to remove the dependence on the function $p_2$ in the inequality \eqref{almLegen}.

Also, suppose that $L(t,x,u,z)$ does not depend on $x$, i.e., $L(t,x,u,z)=\hat{L}(t,u,z)$. Then, the Euler--Lagrange equation \eqref{FOCthm} becomes
$$p(t)\partial_2 \hat{L}(t,{^CD}_{a^+}^\alpha[x^\star](t),z^{\star}(t))=0,\quad t\in[a,b).$$
Again, we may use Theorem \ref{thm1.5} and the continuity of the involved functions to conclude that
$$\partial_2 \hat{L}(t,{^CD}_{a^+}^\alpha[x^\star](t),z^{\star}(t))=0,\quad t\in[a,b].$$
\end{remark}
If we let $\alpha=1$ in Theorem \ref{mainHerglFract}, it follows the following:
\begin{corollary}
The first and second order optimality conditions for the variational problem given by \eqref{ClassHerglotzprobl} are, respectively,
\begin{equation}\label{alpha1}
    \int_t^be^{\int_s^b\partial_4L[\tau]d\tau} \partial_2 L[s]ds+e^{\int_t^b\partial_4L[s]ds}\partial_3 L[t]=0,\ t\in[a,b],
\end{equation}
where $[s]=(s,x^{\star}(s),\dot x^\star(s),z^{\star}(s))$,
and 
\begin{equation*}
\partial_{33}L(t,x^{\star}(t),\dot x^\ast(t),z^{\star}(t))\geq 0,\ t\in[a,b].
\end{equation*}
\end{corollary}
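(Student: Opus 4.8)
The plan is to specialize Theorem \ref{mainHerglFract} to $\alpha=1$ and to evaluate each fractional operator at that value. First I would record the four reductions that occur when $\alpha=1$: the left Caputo derivative collapses to the ordinary derivative, ${^CD}_{a^+}^1[x]=\dot x$, so that the control $u^\star={^CD}_{a^+}^\alpha[x^\star]$ becomes $\dot x^\star$ and the abbreviation $[s]=(s,x^\star(s),\dot x^\star(s),z^\star(s))$ matches the statement; the right Riemann--Liouville integral of order one is $I_{b^-}^1[\phi](t)=\int_t^b\phi(s)\,ds$; the operator $I_{b^-}^{1-\alpha}$ becomes $I_{b^-}^0=\mathrm{Id}$, so the boundary condition $I_{b^-}^{1-\alpha}[p](b)=-1$ reads simply $p(b)=-1$; and the right Riemann--Liouville derivative of order one is $D_{b^-}^1[p]=-\dot p$.

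With these substitutions the adjoint equation for $p$ in Theorem \ref{mainHerglFract} turns into the classical linear first-order problem $-\dot p(t)=p(t)\,\partial_4L[t]$ with $p(b)=-1$. I would solve it explicitly by separation of variables (equivalently, by an integrating factor), obtaining $p(t)=-\exp\big(\int_t^b\partial_4L[s]\,ds\big)$. Note that this $p$ is strictly negative on $[a,b]$, which is precisely the $\alpha=1$ shadow of Theorem \ref{thm1.5} together with Remark \ref{rem1.6}.

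Next I would substitute this $p$ into the Euler--Lagrange equation \eqref{FOCthm}. The first term $I_{b^-}^{\alpha}[p\,\partial_2L](t)$ becomes the genuine integral $\int_t^b p(s)\,\partial_2L[s]\,ds=-\int_t^b e^{\int_s^b\partial_4L[\tau]\,d\tau}\,\partial_2L[s]\,ds$, while the second term is $p(t)\,\partial_3L[t]=-e^{\int_t^b\partial_4L[s]\,ds}\,\partial_3L[t]$. Adding these and multiplying the resulting identity by $-1$ yields exactly \eqref{alpha1}. The second-order condition is then immediate: setting $\alpha=1$ in the Legendre inequality \eqref{Legendre} of Theorem \ref{mainHerglFract} gives $\partial_{33}L(t,x^\star(t),\dot x^\star(t),z^\star(t))\ge 0$ directly, with no further work.

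I do not expect any genuine obstacle here beyond careful bookkeeping, since the whole argument is a limit/specialization of an already-proven theorem. The one point demanding attention is the sign conventions of the right-sided operators, namely $D_{b^-}^1=-\tfrac{d}{dt}$ and $I_{b^-}^0=\mathrm{Id}$: a sign slip there would either flip the terminal condition $p(b)=-1$ or reverse the orientation of the exponential defining $p$, and would consequently corrupt the sign pattern that makes the two terms in \eqref{alpha1} add up correctly. Getting these conventions right is therefore the part I would verify most carefully.
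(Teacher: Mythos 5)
Your proposal is correct and follows essentially the same route as the paper: the paper's proof is the one-line remark that for $\alpha=1$ the adjoint becomes $p(t)=-e^{\int_t^b\partial_4L[s]\,ds}$, and you simply carry out in detail the specialization of the fractional operators, the solution of the adjoint ODE, and the substitution into \eqref{FOCthm} that the paper leaves implicit. Your sign bookkeeping ($D_{b^-}^1=-\tfrac{d}{dt}$, $I_{b^-}^0=\mathrm{Id}$, hence $p(b)=-1$) checks out.
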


\begin{proof}
Just let $\alpha=1$ in Theorem \ref{mainHerglFract} and note that, in this case, $p(t)=-e^{\int_t^b\partial_4L[s]ds}$, for all $t\in[a,b]$. 
\end{proof}
We end this work by noting that we can obtain a differential form for equation \eqref{alpha1}. Indeed, since the integral on the left hand side of \eqref{alpha1} and $f(t)=e^{\int_t^b\partial_4L[s]ds}>0$ are differentiable on $[a,b]$, then $\partial_3 L$ is also differentiable and, hence, we easilly obtain
\begin{multline*}
\partial_2 L(t,x^{\star}(t),\dot x^\star(t),z^{\star}(t))+\partial_4 L(t,x^{\star}(t),\dot x^\star(t),z^{\star}(t))\partial_3 L(t,x^{\star}(t),\dot x^\star(t),z^{\star}(t))\\
-\frac{d}{dt}\partial_3 L(t,x^{\star}(t),\dot x^\star(t),z^{\star}(t))=0,\quad t\in[a,b].
\end{multline*}

\section*{Acknowledgments}
The author would like to thank the referees for their careful reading of the manuscript, and their corrections and suggestions which contributed to improve this article.

\bibliographystyle{amsplain}

\end{document}